\newtheorem{thm}{Theorem}[section]
\newtheorem{theorem}[thm]{Theorem}
\newtheorem {definition}[thm]{Definition}
\newtheorem{corollary}[thm]{Corollary}
\newtheorem{lemma}[thm]{Lemma}
\newtheorem{proposition}[thm]{Proposition}
\newtheorem{example}[thm]{Example}
\newtheorem{remark}[thm]{Remark}
\newtheorem {conjecture}[thm]{Conjecture}
\begin{document}
\newpage
\thispagestyle{empty}

\baselineskip=16pt
\title{The right classification of univariate power series in positive characteristic}      
\author{Nguyen Hong Duc}
\address{Nguyen Hong Duc
\newline\indent Institute of Mathematics, 18 Hoang Quoc Viet Road, Cau Giay District \newline \indent  10307, Hanoi.} 
\email{nhduc@math.ac.vn}
\address{Universit\"{a}t Kaiserslautern, Fachbereich Mathematik, Erwin-Schr\"{o}dinger-Strasse,  
\newline \indent 67663 Kaiserslautern}
\email{dnguyen@mathematik.uni-kl.de}

\date{\today}                  
\maketitle
\begin{abstract}
While the classification of univariate power series up to coordinate change is trivial in characteristic 0, this classification is very different in positive characteristic. In this note we give a complete classification of univariate power series $f\in K[[x]]$, where $K$ is an algebraically closed field of characteristic $p>0$ by explicit normal forms. We show that the right determinacy of $f$ is completely determined by its support. Moreover we prove that the right modality of $f$ is equal to the integer part of $\mu/p$, where $\mu$ is the Milnor number of $f$. As a consequence we prove in this case that the modality is equal to the proper modality, which is the dimension of the $\mu$-constant stratum in an algebraic representative of the semiuniversal deformation with trivial section.
\end{abstract}
\section{Introduction}
In \cite{Arn72} V.I. Arnol'd introduced the ``modality", or the number of moduli, for real and complex hypersurface singularities and he classified singularities with modality smaller than or equal to 2. In oder to generalize the notion of modality to the algebraic setting, the author and Greuel in \cite{GN13} introduced the modality for algebraic group actions and applied it to high jet spaces. 

Let the algebraic group $G$ act on the variety $X$. Then there exists a {\em Rosenlicht stratification} $\{(X_i,p_i), i=1,\ldots, s\}$ of $X$ w.r.t. $G$, i.e. the $X_i$ is a locally closed $G$-invariant subset of $X$, $X=\cup_{i=1}^sX_i$ and the $p_i:X_i\to X_i/G$ a geometric quotient. For each open subset $U\subset X$ we define
$$G\text{-}\mathrm{mod}(U):=\max_{1\leq i\leq s}\{\dim \big (p_i(U\cap X_i)\big)\},$$
and for $x\in X$ we call 
$$G\text{-}\mathrm{mod}(x):=\min \{G\text{-}\mathrm{mod}(U)\ |\ U \text{ a neighbourhood of x}\}$$
the {\em $G$-modality} of $x$.

Let $K$ be an algebraically closed field of characteristic $p\geq 0$, let $K[[{\bf x}]]=K[[x_1,\ldots,x_n]]$ be the formal power series ring and let the right group, $\mathcal R:=Aut(K[[{\bf x]}])$, act on $K[[{\bf x}]]$ by $(\Phi,f) \mapsto \Phi(f)$. Two elements $f,g\in K[[{\bf x}]]$ are called {\em right equivalent}, $f\sim_r g$, if they belong to the same $\mathcal{R}$-orbit, or equivalently, there exists a coordinate change $\Phi\in Aut(K[[{\bf x]}])$ such that $g=\Phi(f)$.

Let $f\in \langle {\bf x}\rangle\subset K[[{\bf x}]]$ and let $\mu(f):=\dim K[[{\bf x}]]/\langle f_{x_1}, \ldots, f_{x_n}\rangle$ be its Milnor number. We call $f$ {\em isolated} if $\mu(f)<\infty$. By \cite[Thm. 5]{BGM12}, $f$ is isolated if and only if it is finitely right determined, i.e. $f$ is right $k$-determined for some $k$. Here $f$ is {\em right $k$-determined} if each $g\in K[[{\bf x}]]$ s.t. $j^k g =j^k f$, is right equivalent to $f$, where $j^k f$ denotes the {\em $k$-jet} of $f$ in the {\em $k$-th jet space} $J_k:=\langle {\bf x}\rangle/\langle {\bf x}\rangle^{k+1}$. The minimum of such $k$ is called the {\em right determinacy} of $f$. For each isolated $f$, the {\em right modality} of $f$, $\mathcal R\text{-}\mathrm{mod}(f)$, is defined to be the $\mathcal R_k\text{-}\mathrm{modality}$ of $j^k f$ in $J_k$ with $k\geq 2\mu(f)$ and $\mathcal R_k$ the $k$-jet of $\mathcal R$. Notice that if $f$ is right equivalent to $g$ then $\mathcal R\text{-}\mathrm{mod}(f)=\mathcal R\text{-}\mathrm{mod}(g)$ (cf. \cite[Prop. A.4]{GN13}).

In Section 2, we show that the right determinacy of an isolated univariate formal power series $f$ is equal to $d(f)$, which is defined by a concrete formula determined by the support of $f$ (Definition \ref{def2.1}, Proposition \ref{pro2.2}). Moreover we give an explicit normal form for any (not necessary isolated) univariate power series $f$ w.r.t. right equivalence (Theorem \ref{thm2.1}). We prove in Section 3 that the right modality of an isolated series $f$ is equal to the integer part of $\mu(f)/p$ (Theorem \ref{thm3.1}). As a consequence we show that the right modality is equal to the dimension of the $\mu$-constant stratum in an algebraic representative of the semiuniversal deformation with trivial section (Corollary \ref{coro3.1}).  

\subsection*{Acknowledgement} 
We would like to thank the referees for their careful reading of the manuscript and helpful comments which improved the presentation of this paper. The result of this article is part of my thesis \cite{Ng13} under the supervision of Professor Gert-Martin Greuel at the Technische Universit\"at Kaiserslautern. I am grateful to him for many valuable suggestions. 
This author's research was partially supported by Vietnam National Foundation for Science and Technology Development(NAFOSTED) grant 101.04-2014.23, and DAAD (Germany).

\section{Normal forms of univariate power series}
Let $f=\sum_{n\geq 0} c_nx^n\in K[[x]]$ be a univariate power series, let $\mathrm{supp}(f):=\{n\geq 0\ |\ c_n\neq 0\}$ be the {\em support} of $f$ and $\mathrm{mt}(f):=\min\{n\ |\ n\in \mathrm{supp}(f)\}$ the {\em multiplicity} of $f$. If $\mathrm{char}(K)=0$ and if $\varphi(x)=a_1x+a_2x^2+\ldots, a_1\neq 0,$ is a coordinate change, then the coefficients $a_i$ of $\varphi$ can be determined inductively from the equation $f(x)=c_0+(\varphi(x))^{\mathrm{mt}(g)}$ with $g(x):=f-c_0$. Hence $f$ is right equivalent to $c_0+x^{\mathrm{mt}(g)}$.

In the following we investigate $f\in K[[x]]$ with $\mathrm{char}(K)=p>0$. The aim of this section is to give a normal form of $f$. It turns out that it depends in a complicated way on the divisibility relation between $p$ and the support of $f$. To describe this relation we make the following definition, where later on $\Delta$ will be $\mathrm{supp}(f)$. 
\begin{definition}\label{def2.1}{\rm
For each $n\in \mathbb N$ and each non-empty subset $\Delta\subset \mathbb N\setminus \{0\}$, we define
\begin{itemize} 
\item[(a)] $m:=m(\Delta):=\min\{n\ |\ n\in \Delta\}$.
\item[(b)] $e:=e(\Delta):=\min \{e(n)\ |\ n\in \Delta\},$ where $e(n):=\max\{i\ |\ p^i \text{ divides } n\}$.
\item[(c)]  $q:=q(\Delta):=\min \{n\in \Delta\ |\ e(n)=e\}.$
\item[(d)] $k:=k(\Delta):=1$ and $e_0(\Delta):=e+1$ if $e(m)=e$ (i.e. $m=q$), otherwise, $$k:=k(\Delta):=\max\{k_{\Delta}(n)\ |\ m\leq n <q, n\in \Delta\},$$ 
where
$$k_{\Delta}(n):=\left\lceil \frac{q-n}{p^{e(n)}-p^{e}}\right\rceil \text{ denotes the ceiling of  }\frac{q-n}{p^{e(n)}-p^{e}}$$
and 
$$e_0:=e_0(\Delta):=\min\{e(n)\ |\ m\leq n<q, n\in \Delta\}.$$
\item[(e)] $d:=d(\Delta):=q+p^{e}(k-1)$.
\item[(f)] $\bar\Lambda(\Delta)=\emptyset$ if $e(m)=e$, otherwise, 
$$\bar\Lambda(\Delta):=\{n\in \mathbb N \ |\ m<n\leq d, e_0\leq e(n) \}\cup \{q\}.$$
\item[(g)] If $e(m)>e$ (i.e. $m<q$) we define

$\Delta_0:= \{n\in \Delta\ |\ n<q\},\ q_0:=q(\Delta_0),\ d_0:=d(\Delta_0), \ \bar d_0:=\min\{d,d_0\}$,

$\Lambda_0(\Delta):=\emptyset$ if $e(m)=e_0$,

$\Lambda_0(\Delta):=\left\{ n\in \mathbb N\ |\  m<n\leq \bar d_0,  e_0<e(n)\right\}\cup\{q_0\}$ if $e(m)>e_0$, and

$\Lambda_1(\Delta):=\{n\in \mathbb N\ |\ q\leq n\leq d,e\leq e(n)<e_0\}$.

\item[(h)] If $e(m)=e$ then $\Lambda(\Delta):=\emptyset$, otherwise,
$$\Lambda(\Delta):=\Lambda_0(\Delta)\cup \Lambda_1(\Delta).$$
\end{itemize}
}\end{definition}
\begin{remark}\label{rm2.0}{\rm
If $f\in K[[x]]$ with $\mu(f)<\infty$ and $\Delta=\mathrm{supp}(f)$ then
\begin{itemize}
\item[(a)] $m(\Delta)=\mathrm{mt}(f)$, the multiplicity (or, the order) of $f$. 
\item[(b)] $q(\Delta)=\mu(f)+1$, the first exponent in the expansion of $f$ which is not divisible by $p$.
\item[(c)] $k_{\Delta}(n)$ is the minimum of $l$ for which  
$$\mathrm{mt}\left(\varphi(x^n)-x^n\right)\geq \mathrm{mt}\left(\varphi(x^q)-x^q\right)= {q+l}$$  
with $q:=q(\Delta)$ and $\varphi=x+u_{l+1}x^{l+1}+\text{terms of higher order}, u_{l+1}\neq 0$, a coordinate change.

Indeed,
\begin{eqnarray*}
\varphi (x^n)&=& \left(x+u_{l+1}x^{l+1}+\ldots\right)^n\\
&=&\left[\left( x+u_{l+1}x^{l+1}+\ldots\right)^{n/{p^{e(n)}}} \right]^{p^{e(n)}}\\
&=&\left[x^{n/{p^{e(n)}}}+({n/{p^{e(n)}}})\cdot u_{l+1} x^{n/{p^{e(n)}}+l}+\ldots \right]^{p^{e(n)}}\\
&=&x^n+({n/{p^{e(n)}}})^{p^{e(n)}}u_{l+1}^{p^{e(n)}}x^{n+l{p^{e(n)}}}+\ldots .
\end{eqnarray*}
It yields that
$$\mathrm{mt}\left(\varphi(x^n)-x^n\right)\geq {q+l}\Leftrightarrow l\geq \frac{q-n}{p^{e(n)}-1}.$$
This proves the claim.
\item[(d)] $k(\Delta)$ is then the minimum of $l$ for which 
$$\varphi(f)=f\mod x^{q+l}$$ 
with $q=q(\Delta)$ and a coordinate change $\varphi$ as above. This is used to show that:
\item[(e)] $d(\Delta)$ is the right determinacy of $f$, cf. Proposition \ref{pro2.2}.  
\end{itemize}
}\end{remark}
\begin{remark}\label{rm2.1}{\rm
The following facts (a)-(e) are immediate consequences of the definition. Property (f) follows from elementary calculations.
\begin{itemize}
\item[(a)] $e(\Delta)<e_0(\Delta)$, $k(\Delta)>0$.
\item[(b)] If $q(\Delta)=q(\Delta')=:q$ and $\Delta\cap \mathbb N_{<q}=\Delta'\cap \mathbb N_{<q}$, then $d(\Delta)=d(\Delta')$ and $\Lambda(\Delta)\equiv \Lambda(\Delta')$. That is, $q(\Delta)$ is the ``determinacy'' of $\Lambda(\Delta)$.
\item[(c)] If $p$ does not divide $m(\Delta)$, then
\begin{itemize}
\item[1.] $e(\Delta)=e(m(\Delta))=0$ and $q(\Delta)=m(\Delta)$.  
\item[2.] $k(\Delta)=1$ and $d(\Delta)=m(\Delta)$.
\end{itemize}
\item[(d)] If $e(m(\Delta))=e(\Delta)$, then
\begin{itemize}
\item[1.] $q(\Delta)=m(\Delta)$.  
\item[2.] $k(\Delta)=1$ and $d(\Delta)=m(\Delta)$.
\end{itemize}
\item[(e)] If $n+lp^{e(n)}\leq d(\Delta)$ for some $l$ and some $n\in \Delta$, then $l\leq k(\Delta)$.
\item[(f)] If $k(\Delta)=k_{\Delta}(n)$, then 
$$k(\Delta)-1+\frac{n}{p^{e(n)}}=\left\lfloor\frac{d(\Delta)}{p^{e(n)}}\right\rfloor,$$
where $\left\lfloor\frac{d(\Delta)}{p^{e(n)}}\right\rfloor$ denotes the floor (or, integer part) of $\frac{d(\Delta)}{p^{e(n)}}$.

In fact, one has, by denoting $e:=e(\Delta),\ q:=q(\Delta),\ k:=k(\Delta),\ d:=d(\Delta)$, that 
\begin{eqnarray*}
\frac{d}{p^{e(n)}}-\left( k-1+\frac{n}{p^{e(n)}}\right) &=& \frac{q+p^e(k-1)}{p^{e(n)}}-\left( k-1+\frac{n}{p^{e(n)}}\right)\\
&=&\frac{p^{e(n)}-p^e}{p^{e(n)}}\cdot \left( \frac{q-n}{p^{e(n)}-p^e}-k+1\right).
\end{eqnarray*}
Then $$0<\frac{d}{p^{e(n)}}-\left(k+\frac{n}{p^{e(n)}}-1\right)<1$$
since $k=\left\lceil\frac{q-n}{p^{e(n)}-p^e}\right\rceil$. This gives us the formula.

\end{itemize}
}\end{remark}
\begin{example}\label{ex2.1}{\rm
Let $p=\mathrm{char}(K)=2$, let
$$f=x^8+x^{36}+x^{37}+\text{terms of higher order in } K[[x]],$$
and let 
$$\Delta:=\mathrm{supp}(f)=\{8,36,37,\ldots\}.$$
Then 
$$e=0,q=37,k=k_{\Delta}(8)=5, d=41 ,$$
$$e_0=2,q_0=36,d_0=60,\bar d_0=d=41.$$
and
$$\Lambda(f)=\{16,24,32,36,37,38,39,40,41\},$$
$$\sharp\Lambda(f)=9=\left\lfloor\frac{q}{p^{e_0}}\right\rfloor-\left\lfloor\frac{m}{p^{e_0}}\right\rfloor+2.$$
}\end{example}
The following proposition is the first key step in the classification.
\begin{proposition}\label{pro2.1}
With the notions as in Definition \ref{def2.1}, assume that $e(\Delta)=0$. Then 
$$\sharp \Lambda(\Delta)\leq \left\lfloor \frac{q}{p}\right\rfloor-\frac{m}{p}+1.$$
More precisely,
\begin{itemize}
\item[(i)] If $e(m)<e_0$ then $\sharp \Lambda(\Delta)=0$.
\item[(ii)] If $e(m)=e_0$ then $\sharp \Lambda(\Delta)= \left\lfloor \frac{q}{p^{e_0}}\right\rfloor-\frac{m}{p^{e_0}}+1.$
\item[(iii)] If $e(m)>e_0$ and 
\begin{itemize}
\item[(1)] if $p>2$ then $\sharp \Lambda(\Delta)\leq \left\lfloor \frac{q}{p^{e_0}}\right\rfloor-\frac{m}{p^{e_0}}+1$;
\item[(2)] if $p=2$ then $\sharp \Lambda(\Delta)\leq \left\lfloor \frac{q}{p^{e_0}}\right\rfloor-\frac{m}{p^{e_0}}+2.$
\end{itemize} 
\end{itemize} 
\end{proposition}
\begin{proof} (i) It is easy to see that, $e(m)<e_0$ if and only if $e(m)=e$ and then $\Lambda(\Delta)=\emptyset$.

(ii) Since $e(m)=e_0$, $\Lambda_0(\Delta)=\emptyset$ and $k_{\Delta}(m)=k$. Then
$$\Lambda(\Delta)=\Lambda_1(\Delta)=\left\{n\in \mathbb N\ |\ q \leq  n\leq d, e(n)<e_0\right\}$$
and hence
$$\sharp \Lambda(\Delta)=k-\left(\left\lfloor\frac{d}{p^{e_0}}\right\rfloor-\left\lfloor\frac{q}{p^{e_0}}\right\rfloor\right)=\left\lfloor \frac{q}{p^{e_0}}\right\rfloor-\frac{m}{p^{e_0}}+1$$
since $k-1+\frac{m}{p^{e(m)}}=\left\lfloor\frac{d}{p^{e(m)}}\right\rfloor$ due to Remark \ref{rm2.1}(f).

(iii) Since $e(m)>e_0$ one has
$$k(\Delta_0)-1=\left\lceil\frac{q_0-n}{p^{e(n)}-p^{e_0}}\right\rceil-1<\frac{q_0-m}{p^{e_0+1}-p^{e_0}}$$
for some $n\in \Delta_0,\ e(n)>e_0$, and 
$$\Lambda_0(\Delta)=\left\{ n'\in \mathbb N\ |\  m<n'\leq \bar d_0, e(n')>e_0\right\}\cup \{q_0\},$$
$$\Lambda_1(\Delta)=\left\{n'\in \mathbb N\ |\ q \leq  n'\leq d, e(n')<e_0\right\}.$$
This implies that
$$\sharp \Lambda_0(\Delta)=\left\lfloor\frac{\bar d_0}{p^{e_0+1}}\right\rfloor-\frac{m}{p^{e_0+1}}+1$$
and
\begin{eqnarray*}
\sharp \Lambda_1(\Delta)&=&(d-q+1)-\left( \left\lfloor\frac{d}{p^{e_0}}\right\rfloor-\left\lfloor\frac{q}{p^{e_0}}\right\rfloor\right)\\
&=&k-\left( \left\lfloor\frac{d}{p^{e_0}}\right\rfloor-\left\lfloor\frac{q}{p^{e_0}}\right\rfloor\right).
\end{eqnarray*}
We consider the following cases:
$$$$
{\bf Case 1:} $k_{\Delta}(q_0)=k$.

Then $k-1+\frac{q_0}{p^{e_0}}=\left\lfloor\frac{d}{p^{e_0}}\right\rfloor$ by Remark \ref{rm2.1}(f). We obtain 
\begin{eqnarray*}
\sharp \Lambda(\Delta)&=&\sharp \Lambda_0(\Delta)+\sharp \Lambda_1(\Delta)=\left\lfloor\frac{q}{p^{e_0}}\right\rfloor-\left(\frac{q_0}{p^{e_0}}-\left\lfloor\frac{\bar d_0}{p^{e_0+1}}\right\rfloor+\frac{m}{p^{e_0+1}}-2\right)\\
&\leq&\left\lfloor\frac{q}{p^{e_0}}\right\rfloor-\left(\frac{q_0}{p^{e_0}}-\left\lfloor\frac{d_0}{p^{e_0+1}}\right\rfloor+\frac{m}{p^{e_0+1}}-2\right)\\
&\leq&\left\lfloor\frac{q}{p^{e_0}}\right\rfloor-\left(\frac{q_0}{p^{e_0}}-\frac{q_0+\left(k(\Delta_0)-1\right)p^{e_0}}{p^{e_0+1}}+\frac{m}{p^{e_0+1}}-2\right)\\
&< &\left\lfloor\frac{q}{p^{e_0}}\right\rfloor-\left(\frac{(p^2-2p)q_0+m}{p^{e_0+2}-p^{e_0+1}}+\frac{m}{p^{e_0+1}}-2\right)\\
&\leq&\left\lfloor\frac{q}{p^{e_0}}\right\rfloor-\left(\frac{m}{p^{e_0}}-2\right),
\end{eqnarray*}
due to $k(\Delta_0)-1<\frac{q_0-m}{p^{e_0+1}-p^{e_0}},$ respectively $q_0>m$. Hence
$$\sharp \Lambda(\Delta)\leq \left\lfloor \frac{q}{p^{e_0}}\right\rfloor-\frac{m}{p^{e_0}}+1.$$ 
{\bf Case 2:} $k_{\Delta}(q_0)<k$.

Then
$$k=\left\lceil\frac{q-n}{p^{e(n)}-1}\right\rceil<\frac{q-m}{p^{e_0+1}-1}+1$$
for some $n\in \Delta_0,\ e(n)>e_0$. It yields that
$$d=q+k-1>(k-1)p^{e_0+1}+m$$
and hence 
\begin{eqnarray*}
\sharp \Lambda(\Delta)&=&\left\lfloor\frac{q}{p^{e_0}}\right\rfloor-\left(\left\lfloor\frac{d}{p^{e_0}}\right\rfloor-\left\lfloor\frac{\bar d_0}{p^{e_0+1}}\right\rfloor+\frac{m}{p^{e_0+1}}-k-1\right)\\
&\leq &\left\lfloor\frac{q}{p^{e_0}}\right\rfloor-\left(\left\lfloor\frac{d}{p^{e_0}}\right\rfloor-\left\lfloor\frac{d}{p^{e_0+1}}\right\rfloor+\frac{m}{p^{e_0+1}}-k-1\right)\\
&\leq &\left\lfloor\frac{q}{p^{e_0}}\right\rfloor-\left(\left\lfloor\frac{(p-1)d}{p^{e_0+1}}\right\rfloor+\frac{m}{p^{e_0+1}}-k-1\right)\\
&\leq &\left\lfloor\frac{q}{p^{e_0}}\right\rfloor-\left((p-1)(k-1)+\frac{m}{p^{e_0}}-k-1\right)\\
&=&\left\lfloor\frac{q}{p^{e_0}}\right\rfloor-\frac{m}{p^{e_0}}+2-(p-2)(k-1).
\end{eqnarray*}
This completes the proposition.
\end{proof}
Note that if $f\in K[[x]]$ and $\mathrm{mt}(f)=0$ then $\mathrm{mt}(f-f(0))>0$. Applying the results from $\mathrm{mt}(f)>0$ to $f-f(0)$ we obtain that $f\sim_r f(0)+g$, where $g$ is a normal form of $f-f(0)$ (cf. Theorem \ref{thm2.1}). From now on we assume that $\mathrm{mt}(f)>0$. We denote, by using notations as in Definition \ref{def2.1} for $\Delta=\mathrm{supp}(f)$,
$$e(f):=e(\Delta),\ q(f):=q(\Delta),\ k(f):=k(\Delta),\ d(f):=d(\Delta)$$ and
$$\bar\Lambda(f):=\bar\Lambda(\Delta),\ \Lambda(f):=\Lambda(\Delta).$$
\begin{remark}\label{rm2.2}
{\rm\begin{itemize}
\item[(a)] The above numbers $\mathrm{mt},e,q,k,d$ and the sets $\Lambda$ and $\bar \Lambda$ are invariant w.r.t. right equivalence.
\item[(b)] Let $f=\sum_{n\geq 1} c_n x^n\in K[[x]]$ and let  
$$\bar f(x)=\sum_{n\geq m(f)}c_n x^{n/p^{e(f)}}.$$
Then $\bar f\in K[[x]]$, $f(x)=\bar f(x^{p^{e(f)}})$ and $e(\bar f)=0$. Moreover, 
$$k(f)=k(\bar f),\sharp \Lambda(f)=\sharp \Lambda(\bar f),\ \sharp \bar\Lambda(f)=\sharp \bar\Lambda(\bar f)$$
and if $\zeta(f)$ denotes one of $\mathrm{mt}(f),e(f),q(f),d(f)$ then
$$\zeta(f)=p^{e(f)} \zeta(\bar f).$$
\item[(c)] Note that $\mu(f)<\infty$ if and only if $e(f)=0$ and then $q(f)=\mu(f)+1$. By \cite[Thm. 2.1]{BGM12} $f$ is then right $\left(2q(f)-\mathrm{mt}(f)\right)$-determined. In Proposition \ref{pro2.2}  we will show that $d(f)$ is the right determinacy of $f$.
\end{itemize}}
\end{remark}
\begin{lemma}\label{lm2.1}
If $e(\mathrm{mt}(f))=e(f)$ then $f\sim_r x^{\mathrm{mt}(f)}$.
\end{lemma}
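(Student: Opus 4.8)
The plan is to produce a coordinate change $\varphi(x)=a_1x+a_2x^2+\cdots$ with $a_1\neq0$ satisfying $\varphi(x)^m=f(x)$, where $m:=m(f)$; this exhibits $f=\Phi(x^m)$ for the automorphism $\Phi$ determined by $\Phi(x)=\varphi(x)$, whence $f\sim_r x^m$. Write $e:=e(f)$ and factor $f(x)=x^m\,U(x)$, where $U(x)=\sum_{n\in\mathrm{supp}(f)}c_nx^{n-m}$ is a unit with $U(0)=c_m\neq0$. The role of the hypothesis $e(m(f))=e(f)$ is that it forces $p^e\mid n$ for every $n\in\mathrm{supp}(f)$ (since $e(n)\geq e(f)=e(m)$) and also $p^e\mid m$; hence every exponent $n-m$ occurring in $U$ is divisible by $p^e$, so $U(x)=\tilde U(x^{p^e})$ for a unit $\tilde U\in K[[y]]$ with $\tilde U(0)=c_m$. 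Thus the task reduces to solving $\eta(x)^m=U(x)$ for a unit $\eta$ with $\eta(0)\neq0$, upon setting $\varphi(x)=x\,\eta(x)$.

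Since $e(m)=e$, we may write $m=m_0p^e$ with $p\nmid m_0$, and I would extract the $m$-th root of $U$ in two stages. First, because $p\nmid m_0$ the integer $m_0$ is invertible in $K$, so the classical inductive argument (as in the characteristic-zero case recalled before the Definition) yields a unit $\tilde V\in K[[y]]$ with $\tilde V(y)^{m_0}=\tilde U(y)$ and $\tilde V(0)=c_m^{1/m_0}\neq0$; putting $V(x):=\tilde V(x^{p^e})$ gives $V(x)^{m_0}=U(x)$, with $V$ again a power series in $x^{p^e}$, say $V(x)=\sum_j v_jx^{jp^e}$. Second, I would take a $p^e$-th root of $V$ using the Frobenius: since $K$ is perfect, each $v_j$ has a unique $p^e$-th root $v_j^{1/p^e}$, and the additivity of Frobenius gives
$$\Big(\sum_j v_j^{1/p^e}x^{j}\Big)^{p^e}=\sum_j v_j\,x^{jp^e}=V(x).$$
Hence $\eta(x):=\sum_j v_j^{1/p^e}x^{j}$ satisfies $\eta(x)^{p^e}=V(x)$ and $\eta(0)=v_0^{1/p^e}=c_m^{1/(m_0p^e)}\neq0$.

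Combining the two stages, $\eta(x)^m=\big(\eta(x)^{p^e}\big)^{m_0}=V(x)^{m_0}=U(x)$, so $\varphi(x)=x\,\eta(x)$ is a genuine coordinate change with $\varphi(x)^m=x^mU(x)=f(x)$, which finishes the argument. The only genuinely characteristic-$p$ difficulty is the step where $p\mid m$: one cannot extract an $m$-th root of the unit $U$ directly, because the usual inductive procedure requires dividing by $m$ and breaks down precisely when $p\mid m$. The hypothesis $e(m(f))=e(f)$ is exactly what makes $U$ a power series in $x^{p^e}$, and this is what allows the Frobenius endomorphism (together with perfectness of $K$) to supply the $p^e$-part of the root that the naive algorithm cannot produce, while the factor $m_0$ coprime to $p$ is handled classically.
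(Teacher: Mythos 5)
Your proof is correct, but it follows a genuinely different route from the paper's. The paper's proof reduces to the case of vanishing $e$ by writing $f(x)=\bar f(x^{p^{e(f)}})$ as in Remark \ref{rm2.2}, observes that the hypothesis forces $q(\bar f)=m(\bar f)$ and hence $\mu(\bar f)=m(\bar f)-1$, and then invokes the finite determinacy theorem of \cite{BGM12} to conclude $\bar f\sim_r c_m x^{m(\bar f)}\sim_r x^{m(\bar f)}$, transporting the equivalence back to $f$; the direct construction is only alluded to in the closing sentence (``an inductive proof as in the case of characteristic 0 works''). You instead build the coordinate change explicitly: factoring $f=x^m U$, using the hypothesis $e(m(f))=e(f)$ to see that the unit $U$ is a power series in $x^{p^e}$, and extracting an $m$-th root of $U$ by splitting $m=m_0p^{e}$, with the prime-to-$p$ part $m_0$ handled by the classical induction and the $p$-part by Frobenius together with perfectness of $K$. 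In effect you have written out the proof the paper only gestures at: your argument is self-contained (no Milnor number, no determinacy theorem, no reduction to $\bar f$), it is constructive, and it isolates exactly where the hypothesis enters (it is what makes $U$ a series in $x^{p^e}$, so that Frobenius can supply the $p$-part of the root). What the paper's route buys is brevity given machinery it needs elsewhere anyway; note also that its proof still implicitly requires the same Frobenius trick you use, in order to pass from $\bar f\sim_r x^{m(\bar f)}$ back to $f\sim_r x^{m(f)}$, as in Step 2 of Proposition \ref{pro2.2}, whereas your construction handles this in one stroke.
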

\begin{proof}
By Remark \ref{rm2.2}, there exists $\bar f\in K[[x]]$ such that $f(x)=\bar f(x^{p^{e(f)}})$ and $e(\bar f)=0$. This implies that $\mu(\bar f)=q(\bar f)-1$ and then $\mu(\bar f)=\mathrm{mt}(\bar f)-1$ since $e(\mathrm{mt}(f))=e(f)$. It follows from \cite[Thm. 2.1]{BGM12} that $\bar f$ is right $(\mathrm{mt}(\bar f)+1)$-determined. That is,
$$\bar f\sim_r c_m x^{\mathrm{mt}(\bar f)}\sim_r x^{\mathrm{mt}(\bar f)}$$ 
 and hence $f\sim_r x^{\mathrm{mt}(f)}$ with the same coordinate change.

In fact, in this case an inductive proof as in the case of characteristic 0 works.
\end{proof}
The next proposition is the second key step in the classification.
\begin{proposition}\label{pro2.2}
With $f$ and $d(f)$ as above, assume that $\mu(f)<\infty$ then $d(f)$ is exactly the right determinacy of $f$.
\end{proposition}
\begin{proof}
We may assume that $e(\mathrm{mt}(f))>e(f)$ since the case $e(\mathrm{mt}(f))=e(f)$ follows from Lemma \ref{lm2.1}. Let us denote $\Delta:=\mathrm{supp}(f)$ and use the notions as in Definition \ref{def2.1}. 
$$$$
{\em Step 1:} Let us show that if $g\in K[[x]]$ with $j^d(f)=j^d(g)$ and $d:=d(f)$ then $f\sim_r g$. 

By Remark \ref{rm2.1}(b), $d(g)=d(f)=d$ since 
$$\mathrm{supp}(f)\cap \{n\in \mathbb N\ |\ n\leq q\}=\mathrm{supp}(g)\cap \{n\in \mathbb N\ |\ n\leq q\}.$$
It suffices to show that
$$f\sim_r f_0:=j^d(f).$$
Indeed, we write
$$f=f_0+f_1\text{ with }\mathrm{mt}(f_1)\geq d+1.$$
and assume without loss of generality, that 
$$f_1=b_{q+l} x^{q+l} +\text{ terms of higher order, with } b_{q+l}\neq 0.$$
Then the coordinate change
$\varphi_1(x)=x+u_{l+1} x^{l+1} $ with $u_{l+1}$ a root of the following non-constant polynomial:
$$qc_q X+\sum_{\frac{q-n}{p^{e(n)}-1}=l} (n/p^{e(n)})^{p^{e(n)}} c_n X^{p^{e(n)}}+b_{q+l}=0 $$
is sufficient to increase the multiplicity of $f_1$ and does not change $f_0$ by Remark \ref{rm2.0}(d). We thus finish by induction.
$$$$
{\em Step 2:} We now show that $f$ is not right $(d-1)$--determined.

For this we need the following\\
{\bf Claim:} $f\sim_r g$ if and only if $j^dg\in \mathcal{R}_{k}\cdot j^df$, where
$$\mathcal{R}_{k}:=\{\psi=u_0x+u_1x^2+\ldots+u_{k-1}x^{k}\ |\ u_0\neq 0\}\subset \mathcal{R}$$
and it acts on the jet space $J_d$ by $(\psi,j^dh)\mapsto j^d(\psi(j^dh))$.\\
{\em Proof of the claim.} The ``if''-statement follows easily from the first step. We assume that $f\sim_r g$, i.e. $g=\varphi(f)$ with
$$\varphi=u_0x+u_1x^2+\ldots, u_0\neq 0.$$ 
Setting
$$\psi:=u_0x+u_1x^2+\ldots+u_{k-1}x^k$$
and $\varphi_1:=\varphi\circ \psi^{-1}$ we obtain that $\varphi=\varphi_1\circ \psi$ and that
$$\varphi_1=x+a_{k}x^{k+1}+\text{ terms of higher order}.$$
Note that $k=k(f)=k(\psi(f))$ due to Remark \ref{rm2.2}(a). It follows from Remark \ref{rm2.0}(d) that
$$j^d\left(\varphi_1(\psi(f))\right)=j^d(\psi(f)).$$
Hence
$$j^dg=j^d\varphi(f)=j^d\left(\varphi_1(\psi(f))\right)=j^d(\psi(f))=j^d(\psi(j^df)).$$
This completes the claim.

We write, for new indeterminates $u_0,\ldots,u_{k-1},t$,  
$$f+tx^d-\psi(j^df)=\sum_{i=m}^{d} b_i(u_0,\ldots,u_{k-1},t) x^i$$
with $\psi:=u_0x+u_1x^2+\ldots+u_{k-1}x^k$ and $b_i\in K[u_0,\ldots,u_{k-1},t]$, and define 
$$V:=Z(b_{m},\ldots, b_d):=\{\left(u_1,\ldots,u_{k-1},t\right)\in \mathbb A^{k}\ |\ b_i(u_0,\ldots,u_{k-1},t)=0\}$$ with the structure sheaf $\mathcal O_V$ and its algebra of global section
$$\mathcal O_V(V)=K[u_0,\ldots,u_{k-1},t]/\langle b_{m},\ldots, b_d\rangle.$$

We prove the second step by contradiction. Suppose the assertion were false. Then for all $t\in K$, $f$ would be right equivalent to $f+tx^{d}$, equivalently, $j^df+tx^{d}\in \mathcal{R}_{k}\cdot j^df$ for all $t$ due to the above claim. This implies that the map $p$ defined by
\begin{displaymath}
\begin{array}{ccccc}
p&:&V&\to & \mathbb A^1\\
&&(u_0,\ldots,u_{k-1},t)&\mapsto & t 
\end{array}
\end{displaymath}
is surjective. It yields that $\dim V\geq 1$. We may assume without loss of generality that $\dim_O V\geq 1$, where $O=(1,0,\ldots,0)\in V$ and $\dim_O V$ denotes the maximal dimension of irreducible components of $V$ containing $O$. Since $\mathcal O_{V,O}\subset R:=K[[u'_0,u_1,\ldots,u_{k-1},t]]/\langle b_{m},\ldots, b_d\rangle$ with $u'_0=u_0-1$,
$$\dim R\geq \dim\mathcal O_{V,O}=\dim_O V\geq 1.$$
By the Curve Selection Lemma, there exists a non-constant $K$--algebra homomorphism 
\begin{eqnarray*}
\phi:K[[u'_0,u_1,\ldots,u_{k-1},t]]&\to & K[[\tau]]\\
u'_0&\mapsto & u'_0(\tau)\\
u_i&\mapsto & u_i(\tau)\\
t&\mapsto & t(\tau)
\end{eqnarray*}
such that 
$$b_i\left(1+u'_0(\tau),u_1(\tau),\ldots,u_{k-1}(\tau),t(\tau)\right)=0\ \text{ for all } i=m,\ldots,d.$$ Since $b_m=c_m(u_0^m-1)$, it follows that
$$\left(1+u'_0(\tau)\right)^m-1=0$$
and therefore $u'_0(\tau)=0$. Notice that, the series $u_i(\tau), i=1,\ldots,k-1$ could not be all equal to zero since $\phi\neq 0$ and since
$$b_{d}(1, u_1,\ldots,u_{k-1},t)=qc_q u_{k-1}+t+b'_d(u_1,\ldots,u_{k-1}), \text{ with } \mathrm{mt}(b'_d)\geq 2.$$
We set
$$l:=\min\{j\ |\ u_j(\tau)\neq 0\},$$
$$L:=\min \{n+lp^{e(n)}\ |\ n\in \Delta\}$$
and
$$I:=\{n\in \Delta\ |\ L=n+lp^{e(n)}\}.$$
By Remark \ref{rm2.0} we can conclude that $m< L< d$ and that
$$\psi(f)-f=\sum_{n\in I}\left(n/p^{e(n)}\right)^{p^{e(n)}} c_n u_l(\tau)^{p^{e(n)}}x^L+\text{ terms of higher order}$$
where $$\psi=x+u_{l}(\tau)x^{l+1}+\ldots+u_{k-1}(\tau)x^k.$$
It follows that 
$$b_L\left(1,u_1(\tau),\ldots,u_{k-1}(\tau),t(\tau)\right)=\sum_{n\in I}\left(n/p^{e(n)}\right)^{p^{e(n)}} c_n u_l(\tau)^{p^{e(n)}}\neq 0,$$
which is a contradiction. This proves the second step.
\end{proof}
In Corollary \ref{coro2.1}, Lemma \ref{lm2.2} and Theorem \ref{thm2.1} below we do not assume that $f$ is an isolated singularity, i.e. $\mu(f)$ may be infinite or, equivalently, $e(f)$ may be bigger than $0$.
\begin{corollary}\label{coro2.1}
Let $f\in K[[x]]$ and $d=d(f)$. Let $g\in K[[x]]$ be such that $e(f)=e(g)$ and $j^d(f)=j^d(g)$. Then $f\sim_r g$. 

We have in particular that $f\sim_r j^d(f)$.
\end{corollary}
\begin{proof}
By Proposition \ref{pro2.2}, it suffices to prove the corollary for the case that $e:=e(f)=e(g)>0$. Taking $\bar f\in K[[x]]$ and $\bar g\in K[[x]]$ such that $f(x)=\bar f(x^{p^e}),\ g(x)=\bar g(x^{p^e})$ as in Remark \ref{rm2.2} we have
$$e(\bar f)=e(\bar g)=0,\ \bar d:=d(\bar f)=d/p^e.$$
Since $j^d(f)=j^d(g)$, $j^{\bar d}(\bar f)=j^{\bar d}(\bar g)$ and hence $\bar f\sim_r \bar g$ according to Proposition \ref{pro2.2}. This implies $f\sim_r g$ with the same coordinate change.
\end{proof}
\begin{lemma}\label{lm2.2}
With $f$, $\mathrm{mt}(f)$ and $\bar\Lambda(f)$ as above, we have   
$$f\sim_r x^{\mathrm{mt}(f)}+\sum_{n\in \bar\Lambda(f)}\lambda_n x^n,$$
for suitable $\lambda_n\in K$. 
\end{lemma}
\begin{proof}
We decompose $f=f_0+f_1$ with
$$f_0:=\sum_{e(f)\leq e(i)<e_0} c_i x^i\text{ and } f_1:=\sum_{e(n)\geq e_0} c_n x^n.$$
Then $\mathrm{mt}(f_0)=q(f)$ and $e(\mathrm{mt}(f_0))=e(f_0)=0$ and hence $f_0\sim_r x^{q(f)}$ by Lemma \ref{lm2.1}. That is, $\varphi(f_0)=x^{q(f)}$ for some coordinate change $\varphi\in Aut(K[[x]])$. It yields that
$$g:=\varphi(f)=\varphi(f_0)+\varphi(f_1)=x^{q(f)}+\varphi(f_1).$$
By Remark \ref{rm2.2}, $d(g)=d(f)$ and
$$\varphi(f_1)=\sum_{e(n)\geq e_0}\lambda_n x^n$$
for some $\lambda_n\in K$. Hence
$$f\sim_r g\sim_r j^{d(g)}(g)=x^{\mathrm{mt}(f)}+\sum_{n\in \bar\Lambda(f)}\lambda_n x^n$$
due to Corollary \ref{coro2.1}.
\end{proof}
From Proposition \ref{pro2.1} and Remark \ref{rm2.2}(b), replacing $f$ by $\bar f$ if $e(f)>0$, and denoting $\Delta:=\mathrm{supp}(f)$ we can conclude that 
$$\sharp \Lambda(f)\leq \left\lfloor \frac{q}{p^{e_0}}\right\rfloor-\frac{m}{p^{e_0}}+2\leq \left\lfloor \frac{d}{p^{e_0}}\right\rfloor-\frac{m}{p^{e_0}}+2=\sharp\bar\Lambda(f).$$
The following theorem is therefore stronger than Lemma \ref{lm2.2}  because it reduces the number of parameters.
\begin{theorem}[Normal form of univariate power series]\label{thm2.1}
With $f$, $\mathrm{mt}(f)$ and $\Lambda(f)$ as above, we have
$$f\sim_r x^{\mathrm{mt}(f)}+\sum_{n\in \Lambda(f)} \lambda_{n} x^{n}$$
for suitable $\lambda_n\in K$.
\end{theorem}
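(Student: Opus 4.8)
The plan is to reduce the theorem to the already-established machinery: Lemma~\ref{lm2.2} together with Proposition~\ref{pro2.2} and the combinatorial bookkeeping encoded in $\Lambda(\Delta)$. The key observation is that Lemma~\ref{lm2.2} already delivers a normal form supported on $\bar\Lambda(f)$, which is typically larger than $\Lambda(f)$; the content of the theorem is that we may further eliminate all exponents in $\bar\Lambda(f)\setminus\Lambda(f)$ by further coordinate changes and a scaling, leaving only the distinguished exponents in $\Lambda(f)$. By Remark~\ref{rm2.2} we may assume $e(f)=0$ throughout (otherwise we pass to $\bar f$, apply the result, and substitute $x\mapsto x^{p^{e(f)}}$, noting that $\Lambda(f)$ is defined precisely so as to be compatible with this substitution).

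\medskip

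First I would invoke Lemma~\ref{lm2.2} to write
$$f\sim_r x^{m}+\sum_{n\in\bar\Lambda(f)}\lambda_n x^n,\qquad m:=m(f),$$
and record that by Proposition~\ref{pro2.2} the series is already determined by its $d$-jet, where $d=d(f)\le\bar d(f)$ by Remark~\ref{rm2.1}(d). So every exponent $n>d$ appearing in $\bar\Lambda(f)$ may be discarded immediately: the tail beyond $x^d$ is irrelevant up to right equivalence. This is the first reduction, and it already cuts $\bar\Lambda(f)$ down to exponents $\le d$. The remaining task is to analyze, case by case according to the definition of $\Lambda(\Delta)$, which of the surviving exponents $n\le d$ can still be removed.

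\medskip

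The mechanism for removing a single exponent is the infinitesimal coordinate change $\varphi(x)=x+u x^{\ell+1}$, exactly as in the inductive construction in the proof of Proposition~\ref{pro2.2}: applying it to $x^m+\cdots$ produces a correction $m\,x^{m+\ell}$ at the lowest order, which can be used to kill an unwanted coefficient $\lambda_{m+\ell}$ provided $p\nmid m$ (so that $m$ is invertible), while the cross-terms it generates land at exponents already under control. More generally, each monomial $\lambda_n x^n$ with $e(n)=e(f)=0$ contributes an invertible leading coefficient and can be used as a ``pivot'' to eliminate a higher exponent, whereas monomials with $e(n)>0$ (where the derivative kills the leading contribution in characteristic $p$) are the ones that survive as genuine moduli. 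The definition of $\Lambda(\Delta)$ — with its five cases distinguishing the divisibility patterns $e(m)$, $e_0$, and the thresholds $k(\Delta)$ versus $k_\Delta(q_0)$ — is precisely the bookkeeping that tracks which exponents admit such a pivot and which do not. I would go through Cases~0--5 in turn, in each case matching the exponents retained in $\Lambda(\Delta)$ against the exponents that cannot be eliminated, and verifying that the pivoting coordinate changes triangularize correctly (each elimination raises the relevant order, so the process terminates at order $d$).

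\medskip

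The main obstacle is the combinatorial case analysis, not any single coordinate change. The hard part will be verifying, in Cases~4 and~5 where $e_0=1$ and $k(\Delta)=k_\Delta(q_0)$, that the subtle split between $\Lambda'_1$ and $\Lambda''_1$ (exponents with $e(n)\ne1$ versus $e(n)=0$) is exactly right — that is, showing that the monomials with $e(n)=1$ in the relevant range either can or cannot be removed depending on whether $k(\Delta_0)\ge\lfloor(q-q_0)/p\rfloor$. This is the same dichotomy that drove the inequality $pq_0\ge d$ in Case~4 of the proof of Proposition~\ref{pro2.1}, and I expect the elimination here to hinge on the same estimate: when $pq_0\ge d$ the lower-block pivots from $\Delta_0$ reach high enough to clear the $e(n)=1$ monomials, and otherwise they do not. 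Confirming that the surviving set after all eliminations coincides \emph{exactly} with $\Lambda(f)$ — neither over- nor under-counting — is where the delicate ceiling/floor arithmetic from Remark~\ref{rm2.1}(e) must be deployed carefully.
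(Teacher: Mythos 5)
Your reduction rests on a false structural premise: $\Lambda(f)$ is \emph{not} a subset of $\bar\Lambda(f)$, so the theorem cannot be reached from Lemma~\ref{lm2.2} by ``eliminating the exponents in $\bar\Lambda(f)\setminus\Lambda(f)$''. With $e(f)=0$ and $m:=m(f)$, $q:=q(f)$, $d:=d(f)$, the set $\bar\Lambda(f)$ consists of $q$ together with exponents $n$ satisfying $e(n)>0$, whereas in Case~1 of the definition (say $e(m)=1$, e.g.\ $m=p$) one has $\Lambda(f)=\{n\in\mathbb N \mid q\le n\le d,\ e(n)=0\}$; these two sets meet only in $\{q\}$. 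Taken literally, your plan would end in a normal form $x^{m}+\lambda_q x^{q}$ with at most one modulus, contradicting Theorem~\ref{thm3.1}, which for $m=p$ requires $\lfloor q/p\rfloor$ moduli. The paper avoids this trap by never applying Lemma~\ref{lm2.2} to $f$ itself: it applies Lemma~\ref{lm2.1} (Case~1) or Lemma~\ref{lm2.2} (Cases~2--5) only to the truncation $f_0=\sum_{n<q}c_nx^n$, then renormalizes the part of degree $\ge q$ by a separate elimination, and finally kills the tail beyond $d$ by Proposition~\ref{pro2.2}.

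The elimination mechanism you describe is also backwards, and this is what makes the gap unfixable within your framework. Over an algebraically closed field of characteristic $p$, a monomial $x^n$ with $e(n)>0$ is a perfectly good pivot: writing $n=p^{e(n)}n'$, the change $x\mapsto x+ux^{l+1}$ sends $x^n$ to $x^n+n'\,u^{p^{e(n)}}x^{\,n+lp^{e(n)}}+\cdots$, and the coefficient equation $cX^{p^{e(n)}}+b=0$ is solvable because Frobenius is surjective. This is exactly what the paper does in its Case~1: the pivot is $x^m$ with $e(m)>0$, solving $\bigl(m/p^{e(m)}\bigr)^{p^{e(m)}}X^{p^{e(m)}}+b_{n_1}=0$, which eliminates every $n$ in $[q,d]$ with $e(n)\ge e(m)$. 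What such a pivot cannot reach are the exponents $n$ with $p^{e(m)}\nmid n-m$, i.e.\ $e(n)<e(m)$; those are precisely the moduli that survive --- the opposite of your claim that terms with $e(n)>0$ are the moduli while only exponents prime to $p$ can serve as pivots. Under your rule the only pivot available in the normal form of Lemma~\ref{lm2.2} is $x^q$, and a pivot at $x^q$ can only clear exponents $\ge q+k(f)>d$ without disturbing lower-order terms (that is exactly the content of Proposition~\ref{pro2.2}); it can never remove anything inside the window $[q,d]$. Consequently the case analysis you defer to cannot be carried out with the tools you allow yourself, and the cases where your mechanism nominally applies ($p\nmid m$) are exactly those (Case~0) where $\Lambda(f)=\emptyset$ and there is nothing to prove.
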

\begin{proof}
We set $\Delta:=\mathrm{supp}(f)$ and use the notations as in Definition \ref{def2.1}. It is sufficient to prove the theorem for the case that $e(m)>e$, because the case $e(m)=e$ follows from Lemma \ref{lm2.1}. Then
$$\Lambda_0(\Delta)=\left\{ n\in \mathbb N \ |\ m<n\leq \bar d_0, e_0<e(n)\right\}\cup \{q_0\},$$
$$\Lambda_1(\Delta)=\left\{n\in \mathbb N\ |\ q \leq  n\leq d, e\leq e(n)<e_0\right\}.$$
We decompose $f=f_0+f_1$ with
$$f_0:=\sum_{i<q} c_i x^i\text{ and } f_1:=\sum_{n\geq q} c_n x^n.$$
Applying Lemma \ref{lm2.2} to $f_0$ we obtain, by denoting $\Lambda'_0(\Delta):=\Lambda(\Delta)\cap\{n\in \mathbb N\ |\ n<q\}$ that 
$$f_0\sim_r x^{m}+\sum_{n\in \bar\Lambda(\Delta_0)}b_n x^n=x^{m}+\sum_{n\in \Lambda'_0(\Delta)}b_n x^n \mod x^{q},$$
for suitable $\lambda_n\in K$, since
$$\bar\Lambda(\Delta_0)\cap \{n\in \mathbb N\ |\ n<q\}\subset \Lambda'_0(\Delta).$$
This means that there exists a coordinate change $\varphi$ such that 
$$\varphi(f_0)=x^{m}+\sum_{n\in \Lambda'_0(\Delta)}b_n x^n \mod x^{q}.$$
We denote $g:=\varphi(f)$,
$$g_0:=x^{m}+\sum_{n\in \Lambda'_0(\Delta)}b_n x^n,$$
and 
$$g_1:=g-g_0:=\sum_{n\geq q} b_n x^n,\ b_q\neq 0.$$
We will construct a series $h$ such that $f\sim_r h$ and
$$h=x^m+\sum_{n\in \Lambda(\Delta)} \lambda_n x^n \mod x^d$$ by eliminating inductively all terms of exponent in  
$$I:=\{i\in \mathbb N\ |\ q\leq i\leq d, e\leq e(i)\}\setminus\Lambda(\Delta).$$
If we succeed then by Corollary \ref{coro2.1}
$$f\sim_r h\sim_r j^dh\sim_r x^m+\sum_{n\in \Lambda(\Delta)} \lambda_n x^n.$$
Let $i_1$ be the minimum exponent in $I$ for which $b_{i_1}\neq 0$. According to Remark \ref{rm2.1} the coordinate change
$$\varphi_1(x)=x+u_{l+1}x^{l+1}$$
with $l:=\dfrac{i_1-q_0}{p^{e_0}}$ and $u_{l+1}$ a root of the non-constant polynomial:
$$\sum_{n+lp^{e(n)}=i_1}b_{n}\big(n/{p^{e(n)}}\big)^{p^{e(n)}} X^{p^{e(n)}}+b_{i_1}=0,$$
makes the coefficient of $x^{i_1}$ vanish, and no term of exponent $i$ in $I$ with $i<i_1$ occurs. We prove the last claim by contradiction. Suppose the claim were false, then we could find $j\in I, j<i_1$ such that the coefficient of $x^j$ in $\varphi_1(g)$ differs from zero. That is, $j$ is an exponent of a term in $(x+u_{l+1}x^{l+1})^n$ for some $n\in \Lambda(\Delta)$ with $b_n\neq 0$. Then there exists an $i\in \mathbb N$ such that
$$j=n+ilp^{e(n)}.$$
Note that $i>0$ by the definition of $i_1$. This implies that 
$$n+ilp^{e(n)}\geq n+lp^{e(n)}>j \text{ for all } n\in \Lambda(\Delta)\text{ with }b_n\neq 0,$$
because
\begin{itemize}
\item if $e(n)\leq e_0$ then $n$ is either $q$ or $q_0$, and hence
$$q_0+lp^{e_0}=i_1 >j$$
and
$$q+lp^{e}\geq q_0+lp^{e_0}=i_1 >j$$
since $l\leq k$ due to Remark \ref{rm2.1}(e).
\item If $e(n)> e_0$ then $e(j)\geq e(n)>e_0$ and therefore $j>\bar d_0$. This implies that $\bar d_0=d_0<j<i_1<d$ and therefore 
$$l=\frac{i_1-q_0}{p^{e_0}}\geq k(\Delta_0).$$
It follows that 
$$n+ilp^{e(n)}\geq n+lp^{e(n)}\geq q_0+lp^{e_0}=i_1 >j.$$
\end{itemize}
This contradiction shows that there is no term of exponent $i$ in $I$ with $i<i_1$ in $\varphi_1(g)$. Hence we obtain by induction a series $h$ as required.
\end{proof}
Note that the families over $\Lambda(f)$ resp. $\bar\Lambda(f)$ in Theorem \ref{thm2.1} resp. Lemma \ref{lm2.2} contain all possible normal forms having the same set $\Lambda$ resp. $\bar\Lambda$ (and hence having the same $m, q, k$ and $d$). The number of parameters of normal forms in the
$\mu$--constant stratum (proof of Theorem \ref{thm3.1}) could be bigger.

The following example shows that this normal form is in general not the best one we can get. This means that, we can sometimes reduce the number of parameters even more.
\begin{example}\label{ex2.2}{\rm 
We consider 
$$f=x^8+x^{36}+x^{37}+\text{terms of higher order}$$
in characteristic $2$, as in Example \ref{ex2.1}. Then $d(f)=41$ and
$$\Lambda(f)=\{16,24,32,36,37,38,39,40,41\}.$$
It follows from Theorem \ref{thm2.1} that
$$f\sim_r x^8+\lambda_1x^{16}+\lambda_2x^{24}+\lambda_3x^{32}+\lambda_4x^{36}+\lambda_5x^{37}+\lambda_6x^{38}+\lambda_7x^{39}+\lambda_8x^{40}+\lambda_9x^{41}$$
for suitable $\lambda_i\in K$. 

On the other hand, applying Lemma \ref{lm2.1} to $f_1:=f-(x^8+x^{36})$ we get $f_1\sim_r x^{37}$. That is, $\varphi(f_1)=x^{37}$ for some coordinate change $\varphi$. It yields
$$\varphi(f)=a_0x^8+a_1x^{16}+a_2x^{24}+a_3x^{32}+a_4x^{36}+x^{37}\mod x^{41}.$$ 
By Proposition \ref{pro2.2},
$$f\sim_r \varphi(f)\sim_r a_0x^8+a_1x^{16}+a_2x^{24}+a_3x^{32}+a_4x^{36}+x^{37}+a_5x^{40}$$
and hence
$$f\sim_r x^8+b_1x^{16}+b_2x^{24}+b_3x^{32}+b_4x^{36}+b_5x^{37}+b_6x^{40}.$$
This shows that, we can find a ``better normal form'' for $f$. Moreover by the coordinate change
$$x+b_6/b_5x^4,$$
we can even get rid of the term $b_6x^{40}$ and obtain that
$$f\sim_r x^8+c_1x^{16}+c_2x^{24}+c_3x^{32}+c_4x^{36}+c_5x^{37}.$$
}\end{example}
In the following, we will give a set of terms of $f$ which can not be removed by coordinate changes and then we conjecture the ``best normal form'' for $f$.
\begin{remark}\label{rm2.3}{\rm 
Let $f\in K[[x]]$ be such that $\mu(f)<\infty$. Let $\Delta:=\mathrm{supp}(f)$ and let
$$q_{i}:=\min\{n\in \Delta\ |\ e(n)\leq i\}.$$
Then 
$$q(f)=q_0\geq q_1\geq\ldots\geq q_{e(m)}=m=q_i, \text{ for all } i\geq e(m).$$
We can see easily that the set $\{q_0,\ldots,q_{e(m)}\}$ is the set of exponents of terms which can not be removed by coordinate changes. However it is not true in general that
$$f\sim_r \sum_{i=1}^{e(m)}\lambda_i x^{q_i}$$
for suitable $\lambda_i\in K$ as the following example shows:
$$f=x^8+x^{36}+x^{37}+x^{38}\in K[[x]] \text{ with } \mathrm{char}(K)=2.$$
Then 
$$q_0=q_1=q=37, q_2=36, q_3=m=8.$$
It is not difficult to see that 
$$f\not\sim_r \lambda_0x^8+\lambda_1 x^{36}+\lambda_2 x^{37}$$
for any $\lambda_0,\lambda_1,\lambda_2\in K$.
}\end{remark}
We like to pose the following conjecture. 
\begin{conjecture}\label{conj2.1}
With notations as in Remark \ref{rm2.3}, let $\Lambda^{*}(f):=\emptyset$ if $e(m)=0$, otherwise
$$\Lambda^{*}(f):=\{n\in \mathbb N\ |\ m<n\leq q, e(n)\geq i \text{ if } q_i\leq n <q_{i-1}\}.$$
Then $f$ is right equivalent to
$$x^{\mathrm{mt}(f)}+\sum_{n\in \Lambda^{*}(f)}\lambda_{n}x^n$$
for suitable $\lambda_n\in K$, and moreover this is a modular family. That is, for each $\lambda=(\lambda_n)_{n\in \Lambda^{*}(f)}$, there are only finitely many $\lambda'=(\lambda'_n)_{n\in \Lambda^{*}(f)}$ such that
$$x^{\mathrm{mt}(f)}+\sum_{n\in \Lambda^{*}(f)}\lambda_{n}x^n\sim_r x^{\mathrm{mt}(f)}+\sum_{n\in \Lambda^{*}(f)}\lambda'_{n}x^n.$$
\end{conjecture}
\section{Right modality}
\begin{theorem}\label{thm3.1}
Let $\mathrm{char } K=p > 0$.  Let $f\in \langle x\rangle \subset K[[x]]$ be a univariate power series such that its Milnor number $\mu:=\mu(f)$ is finite. Then
$$\mathcal{R}\text{-}\mathrm{mod}(f)=\left\lfloor\mu/p\right\rfloor.$$
\end{theorem}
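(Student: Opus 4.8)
The plan is to prove the two inequalities $\mathcal R\text{-}\mathrm{mod}(f)\le\lfloor\mu/p\rfloor$ and $\mathcal R\text{-}\mathrm{mod}(f)\ge\lfloor\mu/p\rfloor$ separately, working in a jet space $J_k$ with $k$ sufficiently large for $f$; by \cite[Prop.~2.15]{GN12} any $k\ge 2\mu$ will do, and such $k$ exceeds $\bar d(f)$, so every exponent occurring in the normal forms of Section~2 is visible in $J_k$. Since $\mu<\infty$ we have $e(f)=0$ and $q(f)=\mu+1$ by Remark~\ref{rm2.2}(c); note $p\nmid q(f)$ because $e(q(f))=e(f)=0$, whence $\lfloor q(f)/p\rfloor=\lfloor\mu/p\rfloor$. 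Recall that $\mathcal R_k\text{-}\mathrm{mod}(j^kf)$ is the minimum over neighbourhoods $U$ of $j^kf$ of the maximal dimension of the geometric quotient of a stratum meeting $U$, so I must bound this quotient dimension for every $g$ with $j^kg\in U$ and then exhibit strata attaining the bound in every $U$.

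For the upper bound, let $g$ satisfy $j^kg\in U$ with $e(g)=0$. By Theorem~\ref{thm2.1}, $g\sim_r x^{m(g)}+\sum_{n\in\Lambda(g)}\lambda_nx^n$, so the $\mathcal R_k$-orbit of $j^kg$ meets the finite-dimensional family of such normal forms; hence the quotient of its stratum is dominated by that family and has dimension at most $\sharp\Lambda(g)$. By Proposition~\ref{pro2.1} and $p\nmid q(g)$ we get $\sharp\Lambda(g)\le\lfloor q(g)/p\rfloor=\lfloor\mu(g)/p\rfloor$, and upper semicontinuity of the Milnor number gives $\mu(g)\le\mu$ for $g$ near $f$; thus the quotient dimension is at most $\lfloor\mu/p\rfloor$. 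The strata with $e(g)>0$ are lower and are handled by passing to $\bar g$ as in Remark~\ref{rm2.2}(b): there $\sharp\Lambda(g)=\sharp\Lambda(\bar g)\le\lfloor q(g)/p^{e(g)+1}\rfloor$, again bounded by $\lfloor\mu/p\rfloor$ on a small $U$. Taking the maximum over all strata and the minimum over $U$ yields $\mathcal R\text{-}\mathrm{mod}(f)\le\lfloor\mu/p\rfloor$.

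For the lower bound I may assume $p\le m(f)$, since otherwise $p\nmid m(f)$ forces $f\sim_r x^{m(f)}$ with $\mu=m(f)-1<p$, so $\lfloor\mu/p\rfloor=0$ and there is nothing to prove. Consider the deformation $g_\varepsilon:=f+\varepsilon x^p$. Because $\mathrm{char}\,K=p$ one has $(\varepsilon x^p)'=0$, so $g_\varepsilon'=f'$ and therefore $\mu(g_\varepsilon)=\mu$ and $q(g_\varepsilon)=q(f)=\mu+1$; moreover $m(g_\varepsilon)=p$ for $\varepsilon\ne0$. By the equality statement of Proposition~\ref{pro2.1}, $\sharp\Lambda(g_\varepsilon)=\lfloor q(g_\varepsilon)/p\rfloor=\lfloor\mu/p\rfloor$, and $g_\varepsilon\to f$ as $\varepsilon\to0$, so a stratum of the combinatorial type of $g_\varepsilon$ meets every neighbourhood of $j^kf$. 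It thus remains to show that the $\lfloor\mu/p\rfloor$ normal-form parameters $\lambda_n$ of $g_\varepsilon$ are genuine moduli, i.e. that the geometric quotient of this stratum has dimension exactly $\sharp\Lambda(g_\varepsilon)$.

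I expect this last independence to be the main obstacle, and I would settle it through the \emph{essential uniqueness} of the normal form. Let $\varphi=a_1x+a_2x^2+\cdots$ be a coordinate change preserving the shape $x^{p}+\sum_{n\in\Lambda}\lambda_nx^n$. Comparing leading terms forces $a_1^{p}=1$, and since $X^{p}-1=(X-1)^{p}$ in characteristic $p$ this gives $a_1=1$: in the extremal case $m=p$ the leading scaling is completely rigid, which is precisely the Frobenius phenomenon that creates the moduli. Re-running the elimination used in Theorem~\ref{thm2.1} and Proposition~\ref{pro2.2} then determines $a_2,a_3,\dots$ successively from the requirement that the non-$\Lambda$ coefficients stay zero, so $\varphi$ is pinned down up to the finite stabiliser of the leading term and acts on the tuple $(\lambda_n)_{n\in\Lambda}$ through a finite group. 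Hence distinct generic tuples lie in distinct orbits, the quotient of the stratum has dimension $\sharp\Lambda(g_\varepsilon)=\lfloor\mu/p\rfloor$, and $\mathcal R\text{-}\mathrm{mod}(f)\ge\lfloor\mu/p\rfloor$. Together with the upper bound this gives $\mathcal R\text{-}\mathrm{mod}(f)=\lfloor\mu/p\rfloor$. The technical heart is the bookkeeping that the successive elimination indeed leaves every $\Lambda$-coefficient untouched (up to the finite action); the invariants $m,q,d$ and the set $\Lambda$ assembled in Section~2 are exactly what make this tracking possible.
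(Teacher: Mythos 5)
Your upper bound is essentially the paper's argument (normal forms from Theorem~\ref{thm2.1}, the count from Proposition~\ref{pro2.1}, and the principle---Lemma~\ref{lm3.3} in the paper---that finitely many algebraic families of dimension $\le d$ hitting every nearby orbit bound the modality by $d$), and it goes through. The genuine gap is in your lower bound, at the transfer from $g_\varepsilon=f+\varepsilon x^p$ to $f$. You argue: the stratum containing $j^kg_\varepsilon$ meets every neighbourhood $U$ of $j^kf$, and its geometric quotient has dimension $\sharp\Lambda(g_\varepsilon)=\lfloor\mu/p\rfloor$, hence $\mathcal R\text{-}\mathrm{mod}(f)\ge\lfloor\mu/p\rfloor$. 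That implication fails: the modality is $\min_U\max_i\dim p_i(U\cap X_i)$, so what must be bounded below is the image under $p_i$ of the part of the stratum \emph{inside} $U$, not the dimension of $p_i(X_i)$. Your uniqueness argument shows that the normal forms $f_\lambda=x^p+\sum_{n\in\Lambda}\lambda_nx^n$ lie in pairwise distinct orbits, but these jets do \emph{not} lie near $j^kf$ (they differ from $f$ by the term $x^p$, which is not small); all you know about $U$ is that it contains the curve $\varepsilon\mapsto j^kg_\varepsilon$, whose orbits form an at most one-dimensional family. To close this you must either exhibit inside $U$ a $\lfloor\mu/p\rfloor$-dimensional family of pairwise inequivalent jets---e.g.\ $f+\varepsilon x^p+\sum_{n\in\Lambda}t_nx^n$, which requires proving that for fixed $\varepsilon$ the passage ${\bf t}\mapsto(\text{normal-form parameters})$ is finite-to-one, a point you never address---or do what the paper does: prove the bound first in the case $m(f)=p$, where $f$ itself is right equivalent to a normal form, modality is constant on orbits, and a neighbourhood of the normal-form point does contain the whole family $f_\lambda$; and then, for $m(f)>p$, invoke the \emph{upper semicontinuity of the right modality} from \cite{GN12} to get $\mathcal R\text{-}\mathrm{mod}(f)\ge\mathcal R\text{-}\mathrm{mod}(f_{t_0})$. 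That semicontinuity theorem is exactly the ingredient your argument is missing; ``$g_\varepsilon\to f$'' alone does not substitute for it.

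A secondary weakness: your uniqueness sketch describes the wrong mechanism. The coefficients $a_2,a_3,\dots$ of a shape-preserving $\varphi$ are not ``determined successively'' and the ambiguity is not ``a finite group''. The correct argument (the paper's Case 1) is a support obstruction: if the first nonlinear term of $\varphi$ is $a_lx^{l+1}$ with $l<k(f)$, then $\varphi(x)^p$ produces the exponent $p(l+1)$, which is divisible by $p$, cannot be cancelled by the terms $\lambda_n\varphi(x)^n$ (those contribute new exponents only in degrees $\ge q+l>p(l+1)$), and is forbidden in the normal form; hence $l\ge k(f)$, after which $\varphi$ acts trivially on $d$-jets and $\lambda=\lambda'$ exactly. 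This part is fixable, but as written it hides the computation on which the rigidity rests.
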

For the proof we need the following lemmas which are proven in \cite{GN13} for unfoldings but the proof works in general (for algebraic families of power series).

Let us recall the notion of unfoldings (see, \cite{GN13}). Let $T$ be an affine variety over $K$ with the structure sheaf $\mathcal{O}$ and its algebra of global section $\mathcal{O}(T)$. An element $f_{t}({x}):=F(x,{t})\in \mathcal{O}(T)[[x]]$ is called an {\em algebraic family of power series} over $T$. A family $f_{t}({x})$ is said to be {\em modular} if for each $t\in T$ there are only finitely many $t'\in T$ such that $f_{t'}$ is right equivalent to $f_{t}$. An {\em unfolding}, or {\em deformation with trivial section} of a power series $f$ at ${t}_0\in T$ over $T$ is a family $f_{t}(x)$ satisfying $f_{{t}_0}=f$ and $f_{t}\in \langle {x}\rangle $ for all ${t}\in T$. 
\begin{remark}\label{rm3.1}{\rm
Let $f\in\langle x\rangle \subset K[[x]]$ be a univariate power series with Milnor number $\mu<\infty$. Then the system $\{x,x^2,\ldots,x^{\mu}\}$ is a basis of the algebra $\langle x\rangle/\langle x\cdot\tfrac{\partial f}{\partial x}\rangle$. By \cite[Prop. 2.14]{GN13} the unfolding over $\mathbb A^{\mu}$,
$$f_{t}(x):=f+\sum_{i=1}^{\mu} t_i\cdot x^i$$
with $t:=(t_1,\ldots,t_{\mu})$ the coordinates of $t\in \mathbb A^{\mu}$, is an algebraic representative of the semiuniversal deformation with trivial section of $f$.
}\end{remark}
\begin{lemma}\label{lm3.3}
With $f$ and $f_{t}({x})$ as in Remark \ref{rm3.1}, assume that there exists a finite number of algebraic families of power series $h^{(i)}_{t}({ x})$ over varieties $T^{(i)},i\in I$ and an open subset $U\subset \mathbb A^{\mu}$ satisfying: for all ${t}\in U$ there exists an $i\in I$ and ${t}_i\in T^{(i)}$ such that $f_{ t}({ x})$ is right equivalent to $h^{(i)}_{{t}_i}({ x})$. Then 
$$\mathcal{R}\text{-}\mathrm{mod}(f)\leq \max_{i=1,\ldots,l} \dim T^{(i)}.$$
\end{lemma}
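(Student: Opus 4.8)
The plan is to establish the inequality $\mathcal{R}\text{-}\mathrm{mod}(f)\leq \max_i \dim T^{(i)}$ by unwinding the definition of right modality in terms of the $G$-modality of a jet, and then transporting the covering hypothesis through the right-equivalence relation. First I would fix a jet order $k$ sufficiently large for $f$ (for instance $k\geq 2\mu$, which exists by the cited result from \cite{GN12}), so that $\mathcal{R}\text{-}\mathrm{mod}(f)$ equals the $\mathcal{R}_k$-modality of $j^kf$ in the jet space $J_k$. The families $h^{(i)}_{\bf t}$ and the semiuniversal unfolding $f_{\bf t}$ all induce, by truncation, algebraic maps into $J_k$; the point is that the image of $f_{\bf t}$ for ${\bf t}\in U$ is, up to the $\mathcal{R}_k$-action, covered by the images of the finitely many truncated families $j^k h^{(i)}_{\bf t_i}$.

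Next I would invoke the Rosenlicht stratification $\{(X_j,p_j)\}$ of $J_k$ with respect to $\mathcal{R}_k$ from the setup in the introduction. The key observation is that the $G$-modality of a point measures the maximal dimension of the image of a neighbourhood under a geometric quotient map $p_j$; since geometric quotients respect orbits, the dimension of $p_j$ applied to an $\mathcal{R}_k$-invariant set equals the dimension of the family of orbits it contains. By hypothesis every orbit meeting the $U$-slice of the semiuniversal family already appears among the orbits swept out by the truncated $h^{(i)}$, so the family of orbits near $j^kf$ is dominated, stratum by stratum, by the images of the finitely many algebraic maps $\mathbf{t}_i\mapsto [j^k h^{(i)}_{\mathbf{t}_i}]$. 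Because the dimension of the image of an algebraic map is at most the dimension of its source, each such contribution is bounded by $\dim T^{(i)}$, and taking the maximum over the finite index set $I$ yields the claimed bound.

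The step I expect to be the main obstacle is making precise the passage from ``$f_{\bf t}$ is right equivalent to some $h^{(i)}_{{\bf t}_i}$'' at the level of full power series to a genuine dimension estimate at the level of $k$-jets inside a fixed Rosenlicht stratum. One must check that the right equivalences can be realised (or at least their relevant $k$-jets captured) so that the $\mathcal{R}_k$-orbit of $j^k f_{\bf t}$ coincides with that of $j^k h^{(i)}_{{\bf t}_i}$; this is where the sufficiently-large choice of $k$ and right $k$-determinacy enter, guaranteeing that the truncation loses no orbit information. A secondary technical point is that the images of the families need to be intersected with each stratum $X_j$ and the quotient dimension computed there, using that $\dim p_j(V)\leq \dim V$ for any subvariety $V\subset X_j$ together with the bound $\dim(j^k h^{(i)})(T^{(i)})\leq \dim T^{(i)}$. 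Once these two points are secured, the maximum over the finite family $I$ and over the finitely many strata produces the desired inequality, and this is essentially the argument of \cite[Prop. 2.16]{GN12} which, as noted, carries over verbatim from unfoldings to arbitrary algebraic families.
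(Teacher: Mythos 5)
You have a genuine gap, and it sits exactly at the sentence ``\emph{so the family of orbits near $j^kf$ is dominated, stratum by stratum, by the images of the finitely many algebraic maps}''. The modality of $f$ is computed from open neighbourhoods of $j^kf$ in the jet space $J_k$, whereas the hypothesis of the lemma only controls the points $j^kf_{\bf t}$, ${\bf t}\in U$, a $\mu$-dimensional subfamily of $J_k$. Passing from ``every orbit meeting the $U$-slice of the family is covered by the $h^{(i)}$'' to ``every orbit through a whole $J_k$-neighbourhood of $j^kf$ is covered'' is precisely the versality (completeness) of the semiuniversal unfolding, and your sketch neither proves it nor flags it as needed: nothing in your argument uses that $f_{\bf t}$ is the \emph{semiuniversal} unfolding rather than an arbitrary family, nor that $U$ is a neighbourhood of ${\bf t}_0$. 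The missing statement is that $\mathcal{R}_k\cdot\{j^kf_{\bf t}\ |\ {\bf t}\in U\}$ contains an open neighbourhood of $j^kf$ in $J_k$; it follows from Remark \ref{rm3.1} by a tangent-space computation: the differential of $(\Phi,{\bf t})\mapsto \Phi(j^kf_{\bf t})$ at $(\mathrm{id},{\bf t}_0)$ has image $\big(\langle x\cdot\frac{\partial f}{\partial x}\rangle+\mathrm{span}(x,\ldots,x^{\mu})\big)\bmod \langle x\rangle^{k+1}$, which is all of $\langle x\rangle/\langle x\rangle^{k+1}$ since $x,\ldots,x^{\mu}$ is a basis of $\langle x\rangle/\langle x\cdot\frac{\partial f}{\partial x}\rangle$; hence the morphism $\mathcal{R}_k\times U\to J_k$ is smooth at $(\mathrm{id},{\bf t}_0)$ and its image contains a neighbourhood of $j^kf$. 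That this ingredient is unavoidable is shown by a counterexample to the ``hypothesis-only'' reading of your argument: for $f=x^{p+1}$ (so $\mu=p$) take $U=\{t_1\neq 0\}\subset\mathbb{A}^{p}$; every $f_{\bf t}$ with ${\bf t}\in U$ has nonzero linear term, hence is right equivalent to $x$, so the covering hypothesis holds with a single constant family over a point, yet $\mathcal{R}\text{-}\mathrm{mod}(f)=1$ by Theorem \ref{thm3.1}. So no amount of stratum-by-stratum bookkeeping can close the proof without invoking semiuniversality at ${\bf t}_0$ (and the tacit assumption ${\bf t}_0\in U$, which the application in Theorem \ref{thm3.1} provides).

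Two smaller points. First, you locate the ``main obstacle'' in passing from $f_{\bf t}\sim_r h^{(i)}_{{\bf t}_i}$ to equality of the $\mathcal{R}_k$-orbits of the $k$-jets and invoke right $k$-determinacy there; in fact this direction is immediate, since $j^k(\Phi(h))$ depends only on $j^k\Phi$ and $j^kh$, so right equivalent power series always have $\mathcal{R}_k$-equivalent $k$-jets. Determinacy would be needed only for the converse, which the proof does not use. Second, your remaining bookkeeping is correct: the strata $X_j$ are $\mathcal{R}_k$-invariant and $p_j$ is constant on orbits, so $p_j(\mathcal{R}_k\cdot S\cap X_j)=p_j(S\cap X_j)$; images of morphisms have dimension at most that of the source; and a finite union of constructible sets has the maximum of the dimensions. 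This is indeed how the cited proof of \cite[Prop.\ 2.16]{GN12} concludes (note the paper itself offers no argument beyond that citation, so there is no in-paper proof to compare against). With the versality step inserted, your outline becomes a complete proof.
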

\begin{proof}
cf. \cite[Proposition 2.15(i)]{GN13}.
\end{proof}
\begin{lemma}\label{lm3.4}
If $f_t(x)$ is a modular unfolding of $f$ over $T$ then  
$$\mathcal{R}\text{-}\mathrm{mod}(f)\geq \dim T.$$
\end{lemma}
\begin{proof}
It follows from \cite[Propositions 2.12(ii) and 2.15(ii)]{GN13}.
\end{proof}
\begin{proof}[Proof of Theorem \ref{thm3.1}]
We first prove the inequality $\mathcal{R}\text{-}\mathrm{mod}(f)\leq\left\lfloor \mu/p\right\rfloor$. Indeed, let
$$I:=\{\Delta\subset \{1,\ldots,q(f)\} |\ q(f)\in \Delta\},$$
and let  
$$h_{{s}_\Delta}(x):=x^{m(\Delta)}+\sum_{n\in \Lambda(\Delta)} s_\Delta^{(n)} x^{n},\ \Delta\in I$$
the finite set of families over $A_{\Delta}\equiv \mathbb{A}^{l_{\Delta}}$ with $l_{\Delta}=\sharp \Lambda(\Delta)$ and $s_\Delta^{(n)},\ n\in \Lambda(\Delta)$ the coordinates of $s_\Delta$ in $A_{\Delta}$.

Notice that if $\Delta \in I$, then $e(\Delta)=0, q(\Delta)\leq q(f)$ and therefore, by Proposition \ref{pro2.1}, 
$$\dim A_{\Delta}=\sharp \Lambda(\Delta)\leq \left\lfloor q(\Delta)/p\right\rfloor\leq \left\lfloor q(f)/p\right\rfloor=\left\lfloor \mu/p\right\rfloor.$$
With $f_t$ as in Remark \ref{rm3.1}, setting
$$\Delta_t:=\{n\in \mathrm{supp}(f_t)\ |\ n\leq q(f)\}$$
for each $t\in \mathbb A^\mu$, we conclude that $\Delta_t\in I$ and $\Lambda(\Delta_t)=\Lambda(\mathrm{supp}(f_t))$ according to Remark \ref{rm2.1}(b). By Theorem \ref{thm2.1}, $f_t\sim_r h_{{s}_{\Delta_t}}$ for some ${s}_{\Delta_t}$. This implies that the finite set of families $h_{{s}_\Delta}(x), \Delta\in I$ satisfies the assumption of Lemma \ref{lm3.3}. Hence 
$$\mathcal{R}\text{-}\mathrm{mod}(f)\leq \max_{\Delta \in I} \dim A_{\Delta}\leq \left\lfloor \mu/p\right\rfloor.$$ 
In oder to prove the other inequality we consider the two following cases.\\ 
{\bf Case 1:} $m(f)=p$. 

Then $q:=q(f)=\mu(f)+1, k:=k(f)=\left\lfloor\frac{q-p}{p-1}\right\rfloor, d:=d(f)=q+k-1$ and $$\Lambda(f)=\{n\in \mathbb N\ |\ q \leq n\leq d, e(n)=0\}$$
and $\sharp \Lambda(f)=\left\lfloor q/p\right\rfloor$ due to Proposition \ref{pro2.1}. It follows from Theorem \ref{thm2.1} that
$$f\sim_r g:=x^p+\sum_{n\in \Lambda(f)} c_{n} x^{n}$$
for suitable $c_n\in K$ with $c_q\neq 0$. Consider the unfolding 
$$g_{\lambda}:=g+\sum_{n\in \Lambda(f)} \lambda_{n} x^{n}$$
of $g$ over $S:=\left\{\lambda=(\lambda_n)_{n\in \Lambda(f)}\in \mathbb A^{\sharp \Lambda(f)}\ |\ \lambda_q+c_q\neq 0\right\}$, where $\lambda_n, {n\in \Lambda(f)}$ are the coordinates of $\lambda$. Let us show that $g_{\lambda}$ is a modular unfolding. In fact, if $\lambda'=(\lambda'_n)_{n\in \Lambda(f)}\in S$ for which $g_{\lambda} \sim_r g_{\lambda'}$, then there exists a coordinate change
$$\varphi:=a x+a_{l}x^{l+1}+\ldots $$ such that 
$$\varphi(g_{\lambda}) = g_{\lambda'}.$$
Looking at the coefficient of $x^p$ we deduce that $a^p=1$ and therefore $a=1$. We have moreover that $l\geq k$, because if $l<k$, equivalently, $q+l>p(l+1)$ then $p(l+1)\in \mathrm{supp}(\varphi(g_{\lambda}))$ but $p(l+1)\not\in \mathrm{supp}(g_{\lambda'})$, that is $\varphi(g_{\lambda}) \neq g_{\lambda'}$, a contradiction. It then follows from Remark \ref{rm2.0}(d) that 
$$j^d(g_{\lambda})=j^d(\varphi(g_{\lambda}))=j^d(g_{\lambda'}),$$ i.e. $\lambda=\lambda'$. This implies that $g_{\lambda}$ is a modular unfolding and hence $$\mathcal{R}\text{-}\mathrm{mod}(f)=\mathcal{R}\text{-}\mathrm{mod}(g)\geq \sharp \Lambda(f)=\left\lfloor q/p\right\rfloor=\left\lfloor \mu/p\right\rfloor$$
due to Lemma \ref{lm3.4}
\\
{\bf Case 2:} $m(f)>p$. 

By the upper semicontinuity of the right modality (cf. \cite[Prop. 2.7]{GN13}) one has $$\mathcal{R}\text{-}\mathrm{mod}(f)\geq \mathcal{R}\text{-}\mathrm{mod}(f_s)$$ with $f_s=f+s\cdot x^p$, for all $s$ in some neighbourhood $W$ of 0 in $\mathbb A^1$. Take a $s_0\in W\setminus \{0\}$ then $\mathcal{R}\text{-}\mathrm{mod}(f_{s_0})=\left\lfloor \mu/p\right\rfloor$ by the first case and hence $$\mathcal{R}\text{-}\mathrm{mod}(f)\geq \mathcal{R}\text{-}\mathrm{mod}(f_{s_0})=\left\lfloor \mu/p\right\rfloor.$$
\end{proof}
\begin{remark}{\rm
We have $\mathcal{R}\text{-}\mathrm{mod}(f)\geq \sharp \Lambda(f)$ by Theorem \ref{thm3.1} and Proposition \ref{pro2.1} with equality if $m(f)\leq p$. Moreover, if $m(f)=p$, then $f_{\lambda}\sim_r f_{\lambda'}$ for $\lambda,\lambda'\in \Lambda(f)$ implies $\lambda=\lambda'$, which follows from the proof of Theorem \ref{thm3.1}. 

The example $f=x^{p+1}$ with $\mathcal{R}\text{-}\mathrm{mod}(f)=1$ but $\Lambda(f)=\emptyset$ shows that a strict inequality $\mathcal{R}\text{-}\mathrm{mod}(f)> \sharp \Lambda(f)$ can happen.
}\end{remark}
With $f$ and the semiuniversal unfolding $f_{t}(x)$ as in Remark \ref{rm3.1} we define
$$\Delta_{\mu}:=\{{t}\in \mathbb A^{\mu}\ |\ \mu(f_{t})=\mu\}$$
the {\em $\mu$-constant stratum} of the unfolding $f_{t}$.
\begin{corollary}\label{coro3.1}
Let $f\in\langle x\rangle \subset K[[x]]$ with the Milnor number $\mu<\infty$. Then 
$$\mathcal{R}\text{-}\mathrm{mod}(f)=\dim\Delta_\mu.$$
\end{corollary}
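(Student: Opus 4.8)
The plan is to reduce the corollary to a dimension count by invoking Theorem \ref{thm3.1}, which already identifies $\mathcal R\text{-}\mathrm{mod}(f)$ with $\lfloor\mu/p\rfloor$. Thus it suffices to prove that the $\mu$-constant stratum $\Sigma_\mu$ of the unfolding $f_{\bf t}(x)=f+\sum_{i=1}^\mu t_i x^i$ has dimension exactly $\lfloor\mu/p\rfloor$. To make the Milnor number computable I would first recall that, since $\mu(f)<\infty$, Remark \ref{rm2.2}(c) gives $e(f)=0$ and $q:=q(f)=\mu+1$, and that for a univariate $g\in\langle x\rangle$ with $g'\neq 0$ one has $\mu(g)=\dim_K K[[x]]/\langle g'\rangle=\mathrm{ord}(g')$, because $g'=x^{\mathrm{ord}(g')}\cdot(\text{unit})$. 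Hence the condition $\mu(f_{\bf t})=\mu$ is a condition on $\mathrm{ord}(f_{\bf t}')$, and differentiating gives $f_{\bf t}'=f'+\sum_{i=1}^\mu i\,t_i x^{i-1}$, whose added terms affect only the coefficients of $x^0,\dots,x^{\mu-1}$.

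Next I would extract the two observations that pin down $\Sigma_\mu$. The coefficient of $x^\mu$ in $f_{\bf t}'$ comes only from $f'$ and equals $q\,c_q$ with $c_q\neq 0$; since $e(f)=0$ forces $p\nmid q$, this coefficient is nonzero in $K$ for \emph{every} $\bf t$, so $\mathrm{ord}(f_{\bf t}')\leq\mu$ and thus $\mu(f_{\bf t})\leq\mu$ for all $\bf t$ (consistent with $\Sigma_\mu$ being the closed, $\mu$-maximal stratum). For $j<\mu$ the coefficient of $x^j$ in $f_{\bf t}'$ is $(j+1)t_{j+1}$, as $f'$ has order $\mu$; this vanishes automatically when $p\mid(j+1)$ and forces $t_{j+1}=0$ otherwise. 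Consequently $\mu(f_{\bf t})=\mu$ if and only if $t_i=0$ for every $i\in\{1,\dots,\mu\}$ with $p\nmid i$. This exhibits $\Sigma_\mu$ as the linear subspace of $\mathbb A^\mu$ cut out by those equations; its free coordinates are precisely the $t_i$ with $p\mid i$ and $1\leq i\leq\mu$, of which there are exactly $\lfloor\mu/p\rfloor$. Therefore $\dim\Sigma_\mu=\lfloor\mu/p\rfloor$, and combining with Theorem \ref{thm3.1} yields $\mathcal R\text{-}\mathrm{mod}(f)=\dim\Sigma_\mu$.

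The computation itself is routine once the reduction $\mu(g)=\mathrm{ord}(g')$ is in place, so the step I expect to be the crux is the positive-characteristic bookkeeping of which derivative coefficients vanish identically. The essential point is to use $e(f)=0$, equivalently $p\nmid q$, to guarantee that the top coefficient $q\,c_q$ survives differentiation; without this the order of $f_{\bf t}'$ could jump and $\Sigma_\mu$ would fail to be this clean linear space. I would also verify that no parameter with index exceeding $\mu$ interferes, which it cannot, since the unfolding is truncated at $i=\mu$ and $\frac{d}{dx}(t_i x^i)=i\,t_i x^{i-1}$ only reaches degree $\mu-1$; this keeps the entire argument inside $\mathbb A^\mu$.
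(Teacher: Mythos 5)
Your proof is correct and follows essentially the same route as the paper: both compute $\mu(f_{\bf t})=\mathrm{ord}(f_{\bf t}')$, observe that in characteristic $p$ the unfolding terms $t_i x^i$ with $p\mid i$ contribute nothing to the derivative while any $t_i\neq 0$ with $p\nmid i$ drops the Milnor number, and conclude that $\Sigma_\mu$ is the linear subspace $\{t_i=0\ \forall i \text{ with } e(i)=0\}$ of dimension $\lfloor\mu/p\rfloor$, finishing via Theorem \ref{thm3.1}. Your write-up merely makes explicit the bookkeeping (the nonvanishing top coefficient $q\,c_q$ and the coefficients $(j+1)t_{j+1}$) that the paper leaves implicit.
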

\begin{proof}
For each ${t}=(t_1,\ldots,t_\mu)\in \mathbb A^{\mu}$, if the set $N_{t}:=\{i=1,\ldots, \mu\ |\ t_i\neq 0, e(i)=0\}$ is not empty, then $\mu(f_{t})=n-1<\mu$ with $n:=\min \{i\ |\ i\in N_{t}\}$. This implies that 
$$\Delta_{\mu}=\{ t=(t_1,\ldots,t_\mu)\in \mathbb A^{\mu}\ |\ t_i=0\ \mathrm{if}\ e(i)=0\}.$$
It yields that
$$\dim \Delta_{\mu}=\sharp\left\{1\leq n\leq \mu\ |\ e(n)>0\right\}=\left\lfloor {\mu}/{p}\right\rfloor$$
and hence $\mathcal{R}\text{-}\mathrm{mod}(f)=\dim\Delta_\mu$ by Theorem \ref{thm3.1}.
\end{proof}

\addcontentsline{toc}{section}{Bibliography}

\end{document}